\documentclass[11pt,a4paper]{article}
\usepackage{epsf,epsfig,amsfonts,amsgen,amsmath,amstext,amsbsy,amsopn,amsthm}
\usepackage{amsmath}
\usepackage{enumerate}
\usepackage{hhline}
\usepackage{multirow}
\usepackage{multicol}
\usepackage{booktabs}
\usepackage{lineno}
\usepackage{amsfonts,amsthm,amssymb,bm}
\usepackage{amsfonts}
\usepackage{graphics}
\usepackage{latexsym,bm}
\usepackage{amsfonts,amsthm,amssymb,bbding}
\usepackage{indentfirst}
\usepackage{graphicx}
\usepackage{color}
\usepackage[colorlinks=true,anchorcolor=blue,filecolor=blue,linkcolor=red,urlcolor=blue,citecolor=blue]{hyperref}
\usepackage{float}
\usepackage{tikz,enumerate}
\usetikzlibrary{calc}
\usepackage{geometry}
\newcommand{\n}{\noindent}

\newtheorem{thm}{Theorem}[section]
\newtheorem{conj}[thm]{Conjecture}

\renewcommand{\le}{\leqslant}

\renewcommand{\ge}{\geqslant}

\begin{document}

\title{On two conjectures of Ho\`ang}

\author{
Hongzhang Chen\thanks{School of Mathematics and Statistics, Gansu
Center for Applied Mathematics, Lanzhou University, Lanzhou, Gansu,
730000, China. Email: \url{mnhzchern@gmail.com}.} 
\and 
Kaiyang Lan\thanks{Corresponding author. School of Mathematics and Statistics, Minnan Normal University, Zhangzhou, Fujian, 363000, China. Email: \url{kylan95@126.com}. Partially supported by the Youth Foundation of Fujian Province (Grant No. JZ240035), 
the Minnan Normal University Foundation (Grant No. KJ2023002).} 
\and Wenlong Zhong\thanks{School of Mathematics and Statistics, Minnan Normal University, Zhangzhou, Fujian, 363000, China. Email: \url{2364810512@qq.com}.} }

\date{\today}
\maketitle

% \linenumbers \pagewiselinenumbers

% \modulolinenumbers[2]

\begin{abstract}
A graph $G$ is said to be \textit{perfectly divisible} if for every induced subgraph $H$ of $G$ with at least one edge, the vertex set $V(H)$ can be partitioned into two sets $A, B$ such that $H[A]$ is perfect and $\omega(B) < \omega(H)$.
It is easy to see that the chromatic number of a perfectly divisible graph is at most $\binom{\omega(G)+1}{2}$.
Ho\`ang conjectured that every graph $G$ with $\alpha(G) \le 3$ is perfectly divisible.
We disprove this conjecture.

In the same vein, a graph $G$ with at least one edge is \textit{$k$-divisible} if for every induced subgraph $H$ of $G$ with at least one edge, the vertex set $V(H)$ can be partitioned into $k$ sets, none of which contains a largest clique of $H$.
It is easy to see that the chromatic number of a $k$-divisible graph is at most $k^{\omega-1}$.
Ho\`ang conjectured that every even-hole-free graph is 3-divisible.
We confirm this conjecture.
\end{abstract}

{\bf Keywords:} Perfect divisibility; $k$-divisibility; $p$-simplicial vertex

{\bf 2020 AMS Subject Classifications:} 05C15, 05C38, 05C69.

\section{Introduction}
All graphs considered in this paper are finite and simple. For a positive integer $k$, we write $[k] = \{1,2,\ldots,k\}$. 
Let $G$ be a graph. A $k$-\emph{coloring} of $G$ is a mapping $\varphi: V(G) \to [k]$ with $\varphi(u) \neq \varphi(v)$ whenever $uv \in E(G)$.
The \emph{chromatic number} $\chi(G)$ is the smallest $k$ for which such a coloring exists.
For a vertex set $X \subseteq V(G)$, denote by $G[X]$ the subgraph induced by $X$; $X$ is called a \emph{clique} if $G[X]$ is complete.
The \emph{clique number} $\omega(G)$ is the maximum size of a clique in $G$.
An \emph{independent set} in $G$ is a set of pairwise non-adjacent vertices.
The \emph{independence number} $\alpha(G)$ is the maximum size of an independent set in $G$.
We denote by $P_k$ and $C_k$ the path and the cycle on $k$ vertices, respectively.

\smallskip
We say that $G$ \emph{contains} a graph $H$ if some induced subgraph of $G$ is isomorphic to $H$. A graph is $H$-\emph{free} if it does not contain $H$, and for a set $\mathcal{H}$ of graphs, $G$ is $\mathcal{H}$-\emph{free} if it contains no member of $\mathcal{H}$. A class $\mathcal{C}$ of graphs is \emph{hereditary} if it is closed under taking induced subgraphs. A hereditary class $\mathcal{G}$ is called $\chi$-\emph{bounded} if there exists a function $f$ (a $\chi$-\emph{binding function}) such that $\chi(G) \le f(\omega(G))$ for all $G \in \mathcal{G}$. If $f$ can be chosen to be a polynomial, then $\mathcal{G}$ is said to be \emph{polynomially $\chi$-bounded}. We refer the reader to \cite{BRIS2004,ISBR2019,AS20} for further results in this area.

\smallskip
A graph $G$ is \emph{perfect} if $\chi(H) = \omega(H)$ for every induced subgraph $H$ of $G$, and \emph{imperfect} otherwise.
Following \cite{CTH2018}, a graph $G$ is \emph{perfectly divisible} if for every induced subgraph $H$ of $G$ with at least one edge, the vertex set $V(H)$ admits a partition into two parts $A$ and $B$ such that $H[A]$ is perfect and $\omega(H[B]) < \omega(H)$. A simple induction on $\omega(G)$ then shows that every perfectly divisible graph $G$ satisfies $\chi(G) \le \binom{\omega(G)+1}{2}$ (see \cite{MC2019}). Consequently, the class of perfectly divisible graphs is polynomially $\chi$-bounded.
For further background and recent results on perfect divisibility, see \cite{RC2023,RCxz2026,MC2021,MC2019,CTH2018,WD2022,TKJK2022,DW2023}.

\smallskip
A graph $G$ is \emph{minimally non-perfectly divisible} if $G$ is not perfectly divisible but every proper induced subgraph of $G$ is perfectly divisible. A set $C$ of vertices of $G$ is a \emph{clique cutset} if $C$ induces a clique in $G$ and $G \setminus C$ is disconnected.
Ho\`ang proved that every minimally non-perfectly divisible graph $G$ with $\alpha(G) \le 3$ cannot contain a clique cutset and proposed the following conjecture.

\begin{conj}[\cite{Hoang2026}]\label{con4.3}
	Every graph $G$ with $\alpha(G) \le 3$ is perfectly divisible.
\end{conj}

Our first result shows that this conjecture is false.

\begin{thm}\label{counterexample4.3}
	There exists a graph $G$ with $\alpha(G) \le 3$ that is not perfectly divisible.
\end{thm}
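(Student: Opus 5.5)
The plan is to exhibit one explicit graph $G$ with $\alpha(G)\le 3$ for which the required partition fails for $H=G$ itself; that already shows $G$ is not perfectly divisible. First I reformulate the condition. A partition $V(G)=A\cup B$ satisfies $\omega(G[B])<\omega(G)$ exactly when $A$ meets every maximum clique of $G$. So $G$ is a counterexample if and only if \emph{every} transversal $A$ of the maximum cliques induces an imperfect graph. By the Strong Perfect Graph Theorem, this means every such $A$ contains an odd hole or an odd antihole. Since $\alpha(G)\le 3$, the only odd holes available are $C_5$ and $C_7$ (as $\alpha(C_{2k+1})=k$), while odd antiholes of any length are allowed.

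It is convenient to pass to complements. Write $G=\overline{F}$ with $F$ a $K_4$-free graph. The maximum cliques of $G$ are the maximum independent sets of $F$, and $G[A]$ is perfect iff $F[A]$ is perfect. So I need a $K_4$-free graph $F$ in which every set meeting all maximum independent sets of $F$ contains an odd hole or odd antihole. Equivalently, for every set $S$ whose removal kills all maximum independent sets (that is, $\alpha(F-S)<\alpha(F)$), the complement $V(F)\setminus S$ must contain such a structure. I will look for $F$ highly symmetric, with many maximum independent sets spread uniformly, so that any transversal is forced to be large. A natural family to search is circulants $C_n(D)$ and Cayley graphs that are $K_4$-free, where transitivity lets me fix one vertex of the transversal and cut down the case analysis.

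I first dispose of the case $\omega(G)=2$, because it shows a larger clique number is needed. Here $G$ is triangle-free with $\alpha\le 3$, hence has at most $8$ vertices since $R(3,4)=9$. A transversal of the edges is a vertex cover, i.e.\ the complement of an independent set. For example, for the Wagner graph, deleting the independent set $\{0,2,5\}$ leaves $\{1,3,4,6,7\}$, which contains an isolated vertex and so is not a $C_5$. So I will aim for $\omega(G)\ge 3$ and larger $n$.

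The verification for a candidate proceeds in three steps:
\begin{enumerate}
\item Check that $\alpha(G)\le 3$, i.e.\ $F$ is $K_4$-free, and compute $\omega(G)$.
\item List all maximum cliques of $G$.
\item Enumerate the inclusion-minimal transversals $A$ and show each contains a $C_5$, $C_7$, or an odd antihole. Supersets of an imperfect set are imperfect, so minimal transversals suffice.
\end{enumerate}

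The main obstacle is finding the right graph. Once a candidate is fixed, the remaining work is a finite, if possibly large, check. I expect that step to be carried out by computer search over small vertex-transitive $K_4$-free graphs $F$. For a human-readable proof, I would then use the automorphism group to reduce the minimal transversals to a few orbit representatives and exhibit an odd hole or antihole in each.
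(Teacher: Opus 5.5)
Your proposal has a genuine gap: it never exhibits a graph, and it gives no argument that the search you describe will succeed. The theorem is an existence statement. ``Find a vertex-transitive $K_4$-free $F$ by computer and check all minimal transversals'' is therefore a search plan, not a proof, and nothing in it rules out that every small candidate turns out to be perfectly divisible. Your starting reformulation is correct. At $H=G$, a valid partition exists iff some set $A$ meeting every maximum clique induces a perfect graph, and restricting to $H=G$ loses nothing, since induced subgraphs keep $\alpha\le 3$. But this only tells you what to check, not why some graph passes. Your ``equivalently'' sentence also swaps the two sides. A set $S$ with $\alpha(F-S)<\alpha(F)$ is the transversal $A$ itself, so it is $F[S]$, not $F-S$, that must be imperfect. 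A search implemented with the criterion as you stated it would test the wrong property.

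The missing idea is to skip transversals entirely and use a global colouring obstruction, which is what the paper does. If $G$ is perfectly divisible, then $\chi(G)\le\binom{\omega(G)+1}{2}$: colour $G[A]$ with $\omega(G)$ colours and recurse on $G[B]$. On the other hand, $\alpha(G)\le 3$ forces $\chi(G)\ge |V(G)|/3$. So it suffices to have a $K_4$-free graph $F$ with $\alpha(F)\le t-1$ on more than $3\binom{t}{2}$ vertices. Then $G=\overline{F}$ has $\alpha(G)\le 3$, $\omega(G)\le t-1$ and $\chi(G)>\binom{t}{2}\ge\binom{\omega(G)+1}{2}$. Such an $F$ exists for large $t$ because $R(4,t)$ grows superquadratically. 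The paper combines Ho\`ang's bound $\chi\le\omega^2$ with the Mattheus--Verstraete bound $R(4,t)\ge c\,t^3/\log^4 t$. Any superquadratic lower bound would do, e.g.\ Spencer's local-lemma bound $R(4,t)\ge c\,(t/\log t)^{5/2}$. Note that for small $t$ one has $R(4,t)-1\le 3\binom{t}{2}$ (e.g.\ $R(4,4)=18$, $R(4,5)=25$). So the counterexamples this argument guarantees are enormous and non-explicit. Whether a small, checkable counterexample exists at all is a separate question your proposal leaves open.
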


Let $k \ge 2$ be an integer.
A graph $G$ with at least one edge is \emph{$k$-divisible} if for every induced subgraph $H$ of $G$ with at least one edge, the vertex set $V(H)$ can be partitioned into $k$ sets, none of which contains a maximum clique of $H$.
As shown in \cite{MC2019,Hoang2026}, the class of $k$-divisible graphs admits an exponential $\chi$-bounding function.

\smallskip
A \emph{hole} is a chordless cycle of length at least four.
A hole is \emph{even} if its length is even, and \emph{odd} otherwise.
Ho\`ang proposed the following conjecture.

\begin{conj}[\cite{Hoang2026}]\label{con4.8}
	Every even-hole-free graph is $3$-divisible.
\end{conj}

Our second result shows that this conjecture is true.
Indeed, we confirm it by establishing a more general statement.

\smallskip
A vertex $v$ in a graph $G$ is called \emph{$k$-simplicial} if its neighborhood $N_G(v)$ can be partitioned into $k$ (possibly empty) cliques.
(Note that a $1$-simplicial vertex is precisely a simplicial vertex, and a $2$-simplicial vertex a bisimplicial vertex.)

\begin{thm}\label{th4.8}
	Let $k \ge 2$ be an integer.
	Let $\mathcal{G}$ be a hereditary class of graphs such that every graph $G \in \mathcal{G}$ contains a $k$-simplicial vertex. 
	Then every graph in $\mathcal{G}$ is $(k+1)$-divisible.
\end{thm}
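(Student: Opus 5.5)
Since $\mathcal{G}$ is hereditary, every induced subgraph $H$ of a graph $G\in\mathcal{G}$ again lies in $\mathcal{G}$. It therefore suffices to prove the following single-graph statement: every $G\in\mathcal{G}$ with at least one edge admits a partition $V(G)=P_1\cup\cdots\cup P_{k+1}$ such that no $P_j$ contains a clique of size $\omega(G)$. I would prove this by induction on $|V(G)|$. The base cases are immediate: for instance $\omega(G)=2$, where a $k$-simplicial vertex $v$ has $|N(v)|\le k$ because $N(v)$ is independent. Also, if $G$ is disconnected, or if some vertex lies in no maximum clique, we can pass to smaller graphs and recombine, since parts that avoid maximum cliques in the pieces still avoid them in $G$.

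For the inductive step, fix a $k$-simplicial vertex $v$ with $N(v)=C_1\cup\cdots\cup C_k$, each $C_i$ a clique; note $|C_i|\le \omega-1$ because $C_i\cup\{v\}$ is a clique. Set $G'=G-v\in\mathcal{G}$.
\begin{itemize}
\item If $\omega(G')<\omega(G)$, every maximum clique contains $v$. Then we only need to separate $v$ from each $(\omega-1)$-clique in $N(v)$. Putting $v$ alone in $P_{k+1}$ and $C_i\setminus\{v\}$, together with an arbitrary distribution of the rest, into $P_1,\dots,P_k$ fails only if a part contains a maximum clique, which must contain $v$, which is impossible.
\item If $\omega(G')=\omega(G)$, induction gives a partition $P_1,\dots,P_{k+1}$ of $V(G')$ with no part containing a maximum clique of $G'$, that is, a maximum clique of $G$ avoiding $v$. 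It remains to add $v$ to some $P_j$ such that $P_j\cap N(v)$ contains no $(\omega-1)$-clique.
\end{itemize}

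The main obstacle is exactly this last placement. All $k+1$ parts could be blocked by pairwise disjoint $(\omega-1)$-cliques $K_1,\dots,K_{k+1}\subseteq N(v)$, and disjointness alone gives no contradiction when the $C_i$ are large.

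My first attempt would be to strengthen the induction hypothesis so that it controls how each $C_i$ is distributed among the parts. For example, I would require that for a prescribed clique, or for the cliques in a fixed elimination ordering $v_1,\dots,v_n$ (with $v_i$ $k$-simplicial in $G[\{v_i,\dots,v_n\}]$), some part misses the clique entirely. Alternatively, I would modify the partition locally: put $v$ into $P_{k+1}$, and move each $x\in P_{k+1}\cap C_i$ into $P_i$. Then $P_{k+1}\cap N(v)=\emptyset$, so the part containing $v$ is safe.

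One must then check that no $P_i$ gains a maximum clique. Any such clique $Q$ would consist of vertices of $P_i$ together with moved vertices of $C_i$, so $Q\cup\{v\}$ would almost be a clique. I would try to derive a contradiction by choosing $v$ (or the induced partition) extremally, for instance taking $v$ last in an elimination ordering so that the partition of $G'$ was itself built greedily compatibly with the $C_i$. I expect this exchange/consistency argument to be the heart of the proof. If it resists, I would fall back on a direct greedy assignment along the elimination ordering, maintaining an invariant on which parts each $C_i$ meets.
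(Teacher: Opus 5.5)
Your setup is the same as the paper's: induction on $|V(G)|$, delete a $k$-simplicial vertex $v$, and split according to whether $\omega(G-v)<\omega(G)$. Your first case is correct. The gap is in the second case, which you leave open, and the reason you give for it being hard is mistaken. You say that $k+1$ pairwise disjoint $(\omega-1)$-cliques $K_1,\dots,K_{k+1}\subseteq N(v)$ yield no contradiction ``when the $C_i$ are large.'' But the $C_i$ cannot be large: you noted yourself that $|C_i|\le\omega-1$. The $K_j$ are disjoint because they lie in different parts, so
\[
(k+1)(\omega-1)\le |K_1|+\cdots+|K_{k+1}|\le |N(v)|\le |C_1|+\cdots+|C_k|\le k(\omega-1),
\]
which is impossible for $\omega\ge 2$. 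Hence some part $P_j$ has $P_j\cap N(v)$ free of $(\omega-1)$-cliques, and adding $v$ to $P_j$ completes the induction. This pigeonhole count is the paper's entire argument for this case. It is the general-$\omega$ version of the observation $|N(v)|\le k$ that you already made for $\omega=2$.

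Without this count, your proposal is not a proof. The strengthened induction hypothesis is never actually formulated. The exchange step (put $v$ in $P_{k+1}$ and move each $x\in P_{k+1}\cap C_i$ into $P_i$) is never verified, and it fails for an arbitrary inductive partition. The reason is that the old vertices of $P_i$ lying in a newly created clique need not be adjacent to $v$, so ``$Q\cup\{v\}$ is almost a clique'' gives nothing. For example, take the path $c\,a\,v\,b$ with $C_1=\{a\}$, $C_2=\{b\}$, and the valid partition $P_1=\{c\}$, $P_2=\emptyset$, $P_3=\{a,b\}$ of $G-v$. The exchange moves $a$ into $P_1$, which then contains the maximum clique $\{a,c\}$. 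The simple count instead places $v$ into $P_1$, whose intersection with $N(v)$ is empty.
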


\begin{proof}[\textbf{Proof of Conjecture~\ref{con4.8}, assuming Theorem~\ref{th4.8}.}]
	Let $G$ be an even-hole-free graph with at least one edge.
	By a classical result of Chudnovsky and Seymour~\cite{CS2023}, every even-hole-free graph contains a $2$-simplicial vertex, that is, a vertex whose neighborhood can be partitioned into two cliques.
	Moreover, the class of even-hole-free graphs is hereditary.
	Taking $k = 2$ in Theorem~\ref{th4.8}, we conclude that every even-hole-free graph is $3$-divisible.
	This proves Conjecture~\ref{con4.8}.
\end{proof}

\section{Proofs}

The main idea of the proof of Theorem~\ref{counterexample4.3} is to use Ramsey numbers and the fact that perfectly divisible graphs satisfy $\chi(G) \le \omega(G)^2$ \cite{Hoang2026}.

\begin{proof}[\bf Proof of Theorem~\ref{counterexample4.3}.]
	For positive integer $t$, let $R(4,t)$ be the minimum positive integer $N$ such that 
	every graph on $N$ vertices contains a clique of size $4$ or an independent set of size $t$. Ramsey's theorem \cite{Ramsey1930} states that $R(4,t)$ exists.
	By the asymptotic bound of Mattheus and Verstraete \cite{MV2024}, there exists a constant $c>0$ such that
	\[
	R(4,t) \ge c \cdot \frac{t^3}{\log^4 t}.
	\]
	In particular, for sufficiently large $t$,
	\[
	R(4,t) - 1 > 3(t-1)^2.
	\]
	Fix such a $t$. By definition of $R(4,t)$, there exists a graph $H$ on $|V(H)| = R(4,t)-1$ vertices with $\omega(H) \le 3$ and $\alpha(H) \le t-1$.
	
	Let $G = \overline{H}$ be the complement of $H$. Then
	\[
	\alpha(G) = \omega(H) \le 3, \quad
	\omega(G) = \alpha(H) \le t-1, \quad
	|V(G)| = R(4,t)-1.
	\]
	Since $\alpha(G) \le 3$, we have
	\[
	\chi(G) \ge \frac{|V(G)|}{\alpha(G)} \ge \frac{R(4,t)-1}{3}.
	\]
	If $G$ were perfectly divisible, then by the bound of Ho\`ang~\cite{Hoang2026} we have $\chi(G) \le \omega(G)^2 \le (t-1)^2$. 
	But our choice of $t$ guarantees
	\[
	\frac{R(4,t)-1}{3} > (t-1)^2,
	\]
	a contradiction. Hence $G$ is not perfectly divisible, while $\alpha(G) \le 3$.	
	This proves Theorem~\ref{counterexample4.3}.
\end{proof}

Next, we prove Theorem~\ref{th4.8}.

\begin{proof}[\bf Proof of Theorem~\ref{th4.8}.]
	Let \(G \in \mathcal{G}\) be a graph with at least one edge.
	We prove the theorem by induction on \(|V(G)|\).
	If \(|V(G)| = 1\) or \(G\) has no edges, the statement is vacuously true.
	Assume \(|V(G)| \ge 2\) and \(G\) has at least one edge.
	By hypothesis, there exists a vertex \(v \in V(G)\) such that
	\(N_G(v) \subseteq C_1 \cup C_2 \cup \cdots \cup C_k\)
	with each \(C_i\) a clique.
	Let \(w = \omega(G)\), \(G' = G \setminus v\), and \(w' = \omega(G')\).
	Clearly, \(w' \in \{w, w-1\}\).
	
	If \(w' = w-1\), then by the induction hypothesis, \(G'\) admits a partition
	\(X_1, X_2, \ldots, X_{k+1}\) such that no \(X_i\) contains a clique of size \(w' = w-1\).
	Add \(v\) to any part, say \(X_1\).
	\begin{itemize}
		\item Any \(w\)-clique containing \(v\) would require a \((w-1)\)-clique in \(N(v) \cap X_1\). 
		Such a \((w-1)\)-clique would be a clique of size \(w-1\) in \(G'\), contradicting the induction hypothesis.
		\item Any \(w\)-clique not containing \(v\) would be a \(w\)-clique in \(G'\), which does not exist because \(\omega(G') = w-1\).
	\end{itemize}
	Thus the partition remains valid for \(G\).
	
	If \(w' = w\), then by the induction hypothesis, \(G'\) has a partition
	\(X_1, X_2, \ldots, X_{k+1}\) such that no \(X_i\) contains a \(w\)-clique.
	Call a part \(X_i\) \emph{bad} if \(X_i \cap N(v)\) contains a \((w-1)\)-clique.
	Since \(C_i \cup \{v\}\) is a clique in \(G\) for each \(i\), we have \(|C_i| \le w-1\).
	Hence,
	\[
	|N(v)| \le |C_1| + |C_2| + \cdots + |C_k| \le k(w-1).
	\]
	
	If all \(k+1\) parts \(X_1, X_2, \ldots, X_{k+1}\) were bad, then \(N(v)\) would contain \(k+1\) pairwise disjoint \((w-1)\)-cliques.
	Consequently, \(|N(v)| \ge (k+1)(w-1)\).
	For \(w \ge 2\) (the case \(w=1\) is trivial), we have \((k+1)(w-1) > k(w-1)\), a contradiction.
	Therefore, at least one part, say \(X_1\), is not bad.
	Now construct a new partition of \(V(G)\) as
	\(X_1' = X_1 \cup \{v\}\) and \(X_i' = X_i\) for \(i = 2, 3, \ldots, k+1\).
	\begin{itemize}
		\item The part \(X_1'\) cannot contain a \(w\)-clique that includes \(v\), otherwise \(X_1 \cap N(v)\) would contain a \((w-1)\)-clique, making \(X_1\) bad.
		\item For \(i \ge 2\), the part \(X_i' = X_i\) still contains no \(w\)-clique by the induction hypothesis.
	\end{itemize}
	Thus \(X_1', X_2', \ldots, X_{k+1}'\) is a valid \((k+1)\)-divisible partition for \(G\).
	
	In both cases, we obtain a partition of \(V(G)\) into \(k+1\) sets, none of which contains a maximum clique of \(G\).
	By induction, every graph in \(\mathcal{G}\) is \((k+1)\)-divisible.
	This proves Theorem~\ref{th4.8}.
\end{proof}

\vspace{6mm}

\n{\bf Acknowledgements:} %The authors would like to thank the
%anonymous referees for their constructive corrections and valuable
%comments on this paper, which have considerably improved the
%presentation of this paper.
This work was supported by the Fujian Key Laboratory of Granular Computing and Applications.

\paragraph{Data availability.}
Data sharing is not applicable to this article as no datasets were generated or
analysed during the current study.

\paragraph{Conflict of interest.}
The authors have no relevant financial or non-financial interests to disclose.


\begin{thebibliography}{99}
    
      \bibitem{RC2023} R. Chen and B. Xu,
      Structure and coloring of ($P_7, C_5$, \text{diamond})-free graphs, \emph{Discrete Appl. Math.} \textbf{372} (2025), 298--307.
      
  
      \bibitem{RCxz2026} R. Chen and X. Zhang,
      Perfect divisibility and coloring in fork-free graphs,
      \textit{Discrete Math.} \textbf{349}(8) (2026), 115146.
      

      \bibitem{MC2021} M. Chudnovsky, S. Huang, T. Karthick and J. Kaufmann,
      Square-free graphs with no induced fork,
      \emph{Electron. J. Combin.} \textbf{28} (2021), \#P2.20.
      
      
      \bibitem{CS2023}
      M. Chudnovsky and P. Seymour,
      Even-hole-free graphs still have bisimplicial vertices,
      \textit{J. Combin. Theory, Ser. B} \textbf{161} (2023), 331--381.
      
      \bibitem{MC2019} M. Chudnovsky and V. Sivaraman,
      Perfect divisibility and 2-divisibility, \emph{J. Graph Theory} \textbf{90} (2019), 54--60.
      
      \bibitem{WD2022} W. Dong, B. Xu and Y. Xu,
      On the chromatic number of some $P_5$-free graphs,
      \emph{Discrete Math.} \textbf{345} (2022), 113004.
      
      \bibitem{CTH2018} C.T. Ho\`ang,
      On the structure of (banner, odd hole)-free graphs, \emph{J. Graph Theory} \textbf{89} (2018), 395--412.
      
      \bibitem{Hoang2026} C.T. Ho\`ang, On the structure of perfectly divisible graphs, \textit{Discrete Math.} \textbf{349}(2) (2026), 114809.
      
      \bibitem{TKJK2022} T. Karthick, J. Kaufmann and V. Sivaraman,
      Coloring graph classes with no induced fork via perfect divisibility,
      \emph{Electron. J. Combin.} \textbf{29} (2022), \#P3.19.
      

      \bibitem{MV2024}
      S. Mattheus and J. Verstraete,
      The asymptotics of $r(4,t)$,
      \textit{Ann. of Math. (2)} \textbf{199} (2) (2024), 919--941.
      
      \bibitem{Ramsey1930}
      F. P. Ramsey,
      On a problem of formal logic,
      \textit{Proc. Lond. Math. Soc.} \textbf{s2--30}(1) (1930), 264--286.
      
    %  \bibitem{BR1993} B. Randerath, The Vizing bound for the chromatic number based on forbidden pairs, PhD thesis, RWTH Aachen, Shaker Verlag, 1993.
      
\bibitem{BRIS2004} B. Randerath and I. Schiermeyer, Vertex colouring and forbidden subgraphs--a survey,
\emph{Graphs Combin.} \textbf{20} (2004), 1--40.
      
\bibitem{ISBR2019} I. Schiermeyer and B. Randerath,
Polynomial $\chi$-binding functions and forbidden induced subgraphs: a survey,
\emph{Graphs Combin.} \textbf{35} (2019), 1--31.
      
\bibitem{AS20} A. Scott and P. Seymour,
A survey of $\chi$-boundedness,
\emph{J. Graph Theory} \textbf{95} (2020), 473--504.
      

\bibitem{DW2023} D. Wu and B. Xu, Perfect divisibility and coloring of some fork-free graphs, \emph{Discrete Math.} \textbf{347} (2024), 114121.

      
\end{thebibliography}
\end{document}